\newtheorem{theorem}{Theorem}[section]
\newtheorem{lemma}[theorem]{Lemma}
\newtheorem{claim}[theorem]{Claim}
\theoremstyle{definition}
\newtheorem{definition}[theorem]{Definition}
\newtheorem{example}[theorem]{Example}
\theoremstyle{remark}
\newcommand{\bpi}{\mathnormal{\mathsf{BPI}}}
\newcommand{\zfc}{\mathnormal{\mathsf{ZFC}}}
\newcommand{\zf}{\mathnormal{\mathsf{ZF}}}
\newcommand{\ac}{\mathnormal{\mathsf{AC}}}
\newcommand{\ma}{\mathnormal{\mathsf{MA}}}
\newcommand{\fa}{\mathnormal{\mathsf{FA}}}
\newcommand{\dc}{\mathnormal{\mathsf{DC}}}
\newcommand{\lo}{\mathnormal{\mathrm{LO}}}
\newcommand{\wo}{\mathnormal{\mathrm{WO}}}
\DeclareMathOperator{\dom}{dom}
\author[D. Fern\'andez]{David Fern\'andez-Bret\'on}
\address{Department of Mathematics\\
University of Michigan\\
2074 East Hall, 530 Church Street \\
Ann Arbor, MI 48109-1043, U.S.A.}
\email{djfernan@umich.edu}
\urladdr{http://www-personal.umich.edu/\textasciitilde djfernan/}
\author[E. Lauri]{Elizabeth Lauri}
\address{Department of Mathematics\\
University of Connecticut\\
341 Mansfield Road U1009\\
Storrs, Connecticut 06269-1009}
\email{elizabeth.lauri@uconn.edu}
\subjclass[2010]{Primary 03E25; Secondary 03E65, 03E50, 03E57.}
 \keywords{Boolean Prime Ideal theorem, Axiom of Choice, Weak Choice Principles, 
Zorn's Lemma, Forcing Axiom, Forcing Notion, Generic Filter.}
\begin{document}

\title[A characterization of the BPI]{A characterization of the Boolean Prime Ideal theorem in terms of forcing notions}

\begin{abstract}
  For certain weak versions of the Axiom of Choice (most notably, 
  the Boolean Prime Ideal theorem), we obtain equivalent formulations 
  in terms of partial orders, and filter-like objects within them 
  intersecting certain dense sets or antichains. This allows us to 
  prove some consequences of the Boolean Prime Ideal theorem using 
  arguments in the style of those that use Zorn's Lemma, or Martin's 
  Axiom.
\end{abstract}

\maketitle

\section{Introduction}

It is well-known that the Axiom of Choice is equivalent to Zorn's Lemma, a 
statement about the existence of certain elements in some partial orders, 
which allows us to prove existence results by defining a partial order of 
approximations to the object whose existence is being established. Very similar 
in spirit are the so-called forcing axioms, which are combinatorial principles 
(that typically go beyond the $\zfc$ axioms, while consistent with these, so 
that they can be used for consistency proofs) that also involve the use 
of partial orders in their application. In order to state precisely what 
a forcing axiom is, we will proceed to introduce the necessary 
definitions.

\begin{definition}
Let $\mathbb P$ be a partially ordered set (and denote the corresponding 
partial order by $\leq$). Then
\begin{enumerate}
\item We will typically refer to elements of $\mathbb P$ as \textbf{conditions},
\item we will say that the condition $p$ \textbf{extends} the condition $q$ if 
$p\leq q$,
\item we will say that the two conditions $p,q$ are \textbf{compatible}, 
which we will denote by $p\not\perp q$, if 
they have a common extension, i.e. if there exists a condition $r$ such 
that $r\leq p$ and $r\leq q$,
\item we will say that the two conditions $p,q$ are \textbf{incompatible}, 
denoted $p\perp q$, if they are not compatible,
\item a subset $A\subseteq\mathbb P$ will be called an \textbf{antichain} if 
any two distinct conditions $p,q\in A$ must be incompatible,
\item we say that a subset $D\subseteq\mathbb P$ is \textbf{dense} 
if every condition has an extension in $D$, i.e. 
$(\forall p\in\mathbb P)(\exists q\in D)(q\leq p)$,
\item a subset $G\subseteq\mathbb P$ will be called a \textbf{filter} if 
it is closed upwards (this is, if
$(\forall p\in G)(\forall q\in\mathbb P)(p\leq q\Rightarrow q\in G)$), and 
for every $p,q\in G$ there exists an $r\in G$ which extends both $p$ and $q$,
\item if $\mathcal D$ is a family of dense subsets of $\mathbb P$ (respectively, 
if $\mathcal A$ is a family of antichains) then the filter $G$ will be called 
\textbf{$\mathcal D$-generic} if it intersects every element of $\mathcal D$ 
(respectively, \textbf{$\mathcal A$-generic} if it intersects every element 
of $\mathcal A$).
\end{enumerate}
\end{definition}

The previous definition contains all the terminology needed to talk about 
forcing axioms. A partial order $\mathbb P$ is said to be \textbf{c.c.c.} if 
every antichain is countable. Historically, the first example of a forcing 
axiom would be the one that is known as Martin's Axiom\footnote{For an 
introduction to Martin's Axiom and its consequences, 
see~\cite[Chapter II.2]{kunen}.}, abbreviated $\ma$.

\begin{tabbing}
  $\ma$ \quad \= For every c.c.c. partial order $\mathbb P$ and every 
family $\mathcal D$ of dense sets \\
 \> with $|\mathcal D|<\mathfrak c$, there exists a $\mathcal D$-generic filter.
\end{tabbing}

Other variations of this combinatorial principle have been proposed 
over the years, and this has eventually led to the formulation of an 
abstract template for a forcing axiom, which is as follows: let 
$\mathcal X$ be a class of partial orders, and $\kappa$ a cardinal. 
Then we can state the forcing axiom

\begin{tabbing}
  $\fa_\kappa(\mathcal X)$ \quad \= For every partial order $\mathbb P\in\mathcal X$ and every 
family $\mathcal D$ of dense sets \\
 \> satisfying $|\mathcal D|\leq\kappa$, there exists a $\mathcal D$-generic filter.
\end{tabbing}

Not all forcing axioms necessarily fall neatly into this template, 
so sometimes we will state some combinatorial principles that deviate 
slightly from it. For instance, we might want to use 
something other than a cardinal number in the place of the subindex 
in our template. An obvious example would be $\fa_\infty(\mathcal X)$, 
which should be interpreted as the statement that for every partial order 
$\mathbb P\in\mathcal X$ and every family $\mathcal D$ of dense sets (where 
$\mathcal D$ can be completely arbitrary, without restrictions of cardinality 
or of any other property), there exists a $\mathcal D$-generic filter. 
Similarly, expressions such as $\fa_{­<\kappa}(\mathcal X)$ should be given 
the obvious meaning.

This notation allows us to state several forcing axioms in a compact 
way; for example $\ma$ becomes the statement $\fa_{<\mathfrak c}(\text{c.c.c.})$, 
the combinatorial principle $\mathfrak t=\mathfrak c$ is equivalent 
(by Bell's Theorem~\cite{bell}, together with Malliaris and 
Shelah's~\cite{maryanthe1,maryanthe2} recent result that 
$\mathfrak p=\mathfrak t$) to 
$\fa_{<\mathfrak c}(\sigma\text{-centred})$\footnote{Recall that a 
partial order is $\sigma$-centred if it can be written as the union 
of countably many filters.}, and 
the combinatorial principle $\mathop{\mathrm{cov}}(\mathcal M)=\mathfrak c$ is 
equivalent to $\fa_{<\mathfrak c}(\text{countable})$. The Proper Forcing 
Axiom is, of course, the statement $\fa_{\omega_1}(\text{proper})$\footnote{We 
omit the definition of a proper partial order, since it will not be relevant 
for this paper, but direct the interested reader to~\cite{shelah} 
or~\cite{abraham}.}.

Although Zorn's Lemma is a statement that concerns partial orders, at 
first sight it does not look like a forcing axiom, for the object whose 
existence it asserts is not a filter. One of the first results in 
this paper shows that this first impression is misguided, and it is 
possible to rephrase Zorn's Lemma as a perfectly legitimate forcing 
axiom. This continues a certain 
line of research, that has been pursued~\cite{shannon}
in the past, concerning the possibility of expressing 
the Axiom of Choice, or some weak versions of it, as forcing axioms. 
The most immediate example of this is the following 
result\footnote{This result is attributed to 
Todor\v{c}evi\'c, although Goldblatt~\cite{goldblatt} came quite 
close to stating it.}. Recall that the principle of Dependent Choice, 
abbreviated $\dc$,  
is the statement that, for every set $X$ equipped with a binary relation 
$R$ satisfying $(\forall x\in X)(\exists y\in X)(x\ R\ y)$, there 
exists a sequence $\langle x_n\big|n<\omega\rangle$ such that 
$(\forall n<\omega)(x_n\ R\ x_{n+1})$.

\begin{theorem}[Todor\v{c}evi\'c]\label{stevodc}
In the theory $\zf$, the following two statements are equivalent:
\begin{enumerate}
\item $\dc$
\item $\fa_\omega(\mathcal P)$, where $\mathcal P$ is the class of 
all partial orders.
\end{enumerate}
\end{theorem}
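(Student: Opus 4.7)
The plan is to prove the two implications separately.

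For (1) $\Rightarrow$ (2), I would fix a partial order $\mathbb P$ and an enumeration $\mathcal D=\{D_n\mid n<\omega\}$ of the given dense sets, and build a decreasing chain $p_0\geq p_1\geq\cdots$ with $p_{n+1}\in D_n$, whose upward closure $G=\{p\in\mathbb P\mid (\exists n)(p_n\leq p)\}$ is then easily checked to be a $\mathcal D$-generic filter. Since $\dc$ as stated does not allow one to prescribe the starting element of the sequence it produces, I would not apply it to $\mathbb P$ directly, but rather to the set $X$ of all non-empty finite sequences $\langle q_0,\ldots,q_{k-1}\rangle$ satisfying $q_{i+1}\leq q_i$ and $q_{i+1}\in D_i$ for every $i<k-1$, equipped with the relation $s\mathrel{R}t$ iff $t=s^\frown\langle q\rangle$ for some $q\leq s(k-1)$ lying in $D_{k-1}$. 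Density of each $D_n$ supplies the required $R$-successors, so $\dc$ produces a chain of sequences $(s_n)_{n<\omega}$ whose union is the sought-after decreasing chain; by construction, every such chain automatically witnesses every $D_n$ at the correct position, regardless of how long the initial sequence $s_0$ happens to be.

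For (2) $\Rightarrow$ (1), given $(X,R)$ with $(\forall x\in X)(\exists y\in X)(x\mathrel{R}y)$, I would introduce the partial order $\mathbb P$ whose conditions are the non-empty finite $R$-chains in $X$, ordered so that $s\leq t$ iff $s$ end-extends $t$. The sets $D_n=\{s\in\mathbb P\mid |s|\geq n\}$ are dense by repeated application of the hypothesis on $R$. Invoking $\fa_\omega(\mathcal P)$ with $\mathcal D=\{D_n\mid n<\omega\}$ yields a filter $G$ meeting each $D_n$. Two finite sequences with a common end-extension must be comparable as sequences, so the filter property forces the elements of $G$ to be pairwise comparable under end-extension; consequently $\bigcup G$ is a single function, whose domain is all of $\omega$ because $G$ contains arbitrarily long sequences, and the resulting $\omega$-sequence satisfies the required $R$-recursion by construction.

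Neither direction presents a deep conceptual difficulty, but the main subtlety is the one already flagged in the forward direction, namely that $\dc$ does not let one fix the initial element of the sequence. The trick of applying $\dc$ to the set of approximating finite sequences, rather than to $\mathbb P$ itself, bypasses this issue by aligning the index of each dense set with the length of the corresponding segment of the chain. In the reverse direction, the only non-trivial point is the combinatorial observation that pairwise compatibility in $\mathbb P$ coincides with pairwise comparability as finite sequences, which is what permits reading off a genuine $\omega$-sequence as $\bigcup G$.
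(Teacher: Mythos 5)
Your proof is correct; note that the paper itself gives no proof of this theorem, deferring instead to Parente's thesis \cite[Theorem 3.2.4]{parente}, and your argument is the standard one: in the forward direction you apply $\dc$ not to $\mathbb P$ but to the set of finite descending sequences meeting the $D_i$ in order, so that the length of a sequence indexes the relevant dense set (this also resolves the real obstruction, namely that the relation fed to $\dc$ must be a single fixed relation and cannot vary with the stage, which is subtly different from the issue of prescribing a starting point, though your device handles both at once); in the reverse direction you force with finite $R$-chains under end-extension, where compatibility indeed coincides with comparability. The only caveats are trivial ones that every treatment elides: one must assume $\mathbb P$ (respectively $X$) nonempty, as the statement of $\dc$ quoted in the paper is vacuously problematic for $X=\varnothing$.
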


The interested reader can find a full proof of Theorem~\ref{stevodc}, 
as well as of Theorem~\ref{stevoac} below, in~\cite[Theorem 3.2.4]{parente}. 
The next theorem that we mention characterizes the full Axiom of Choice, 
abbreviated $\ac$,  
as a collection of forcing axioms. Recall that, 
for a cardinal number $\kappa$, a partial order $\mathbb P$ is said 
to be \textbf{$\kappa$-closed} if every descending sequence 
$\langle p_\alpha\big|\alpha<\lambda\rangle$ of conditions 
of length $\lambda<\kappa$ has 
a lower bound\footnote{With this definition, every partial order 
is $\omega$-closed, but being $\kappa$-closed for some 
$\kappa\geq\omega_1$ is a nontrivial condition to impose 
on a partial order.}.

\begin{theorem}[Todor\v{c}evi\'c]\label{stevoac}
In the theory $\zf$, the following two statements are equivalent:
\begin{enumerate}
\item $\ac$
\item for every cardinal $\kappa$, $\fa_\kappa(\kappa\text{-closed})$.
\end{enumerate}
\end{theorem}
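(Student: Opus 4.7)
The plan is to handle the two directions by rather different arguments. For $(1) \Rightarrow (2)$, I would mimic the standard transfinite-recursion argument used to prove principles like Martin's axiom under $\ac$. Given a $\kappa$-closed poset $\mathbb P$ and a family $\mathcal D$ of dense sets with $|\mathcal D| \leq \kappa$, I would invoke $\ac$ to well-order $\mathcal D$ as $\{D_\alpha : \alpha < \lambda\}$ for some $\lambda \leq \kappa$, and then recursively construct a descending sequence $\langle p_\alpha : \alpha < \lambda\rangle$ with $p_\alpha \in D_\alpha$: at successor stages extending by density, and at limit stages first extracting a lower bound of the previously chosen terms by $\kappa$-closedness and then extending further by density. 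A single choice function on the nonempty subsets of $\mathbb P$, supplied by $\ac$, handles these many choices uniformly, and the upward closure of the resulting sequence is a $\mathcal D$-generic filter.

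For $(2) \Rightarrow (1)$, my plan is to derive the well-ordering theorem, and hence $\ac$, from the forcing axioms. I would fix a nonempty set $Y$ and set $\kappa = \aleph(Y)$, the Hartogs aleph of $Y$, i.e., the least ordinal that does not inject into $Y$; this is defined in $\zf$ and is always an initial ordinal. I would then consider the poset $\mathbb P$ of partial injections $p\colon \dom(p) \to \kappa$ with $\dom(p) \subseteq Y$, ordered by $p \leq q$ iff $p \supseteq q$. Since the union of any chain of partial injections is again a partial injection, $\mathbb P$ is $\kappa$-closed (in fact $\infty$-closed). For each $\alpha < \kappa$ I would set
\[
D_\alpha = \{p \in \mathbb P : \alpha \in \mathrm{range}(p) \text{ or } \dom(p) = Y\}.
\]
The crucial observation, provable in pure $\zf$, is that each $D_\alpha$ is dense: if $p \notin D_\alpha$, then $Y \setminus \dom(p) \neq \emptyset$, so I can extract a single element $y$ from this nonempty set (which is permitted without $\ac$) and take $q = p \cup \{(y,\alpha)\}$. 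Applying $\fa_\kappa(\kappa\text{-closed})$ to the $\kappa$-indexed family $\{D_\alpha : \alpha < \kappa\}$, I would obtain a generic filter $G$ and form $F = \bigcup G$, a partial injection from a subset of $Y$ into $\kappa$. Either some $p \in G$ already has $\dom(p) = Y$, yielding an injection $Y \hookrightarrow \kappa$ that well-orders $Y$; or else genericity forces $\mathrm{range}(F) = \kappa$, making $F^{-1}$ an injection $\kappa \hookrightarrow Y$ and contradicting the defining property of $\aleph(Y)$.

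The hard part is clearly the second direction: one must design a poset and a family of dense sets whose very indexing lives within $\zf$ (that is, indexed by an honest aleph, so that ``$|\mathcal D|\leq\kappa$'' is unambiguously meaningful without $\ac$), and yet whose generic filter pins down a well-ordering of an arbitrary set $Y$. The key trick, which is the crux of the argument, is to take the codomain of the partial injections to be Hartogs' aleph and to weave a dichotomy into each $D_\alpha$, so that the ``bad'' scenario---in which no condition in $G$ has domain $Y$---forces all of $\kappa$ into $\mathrm{range}(F)$ and thereby directly contradicts the choice of $\kappa$.
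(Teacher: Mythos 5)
Your proposal is correct, but note that the paper itself gives no proof of Theorem~\ref{stevoac}: it explicitly defers to \cite[Theorem 3.2.4]{parente}, so there is no in-paper argument to compare against. What you have written is essentially Todor\v{c}evi\'c's standard argument as found in that reference: the forward direction is the usual transfinite recursion, where a single choice function on the nonempty subsets of $\mathbb P$ makes the recursion legitimate in $\zf$, and $\kappa$-closedness is invoked only at limit stages (on sequences of length $<\kappa$, which is all the paper's definition of $\kappa$-closed provides). For the converse you correctly identify the two points where the $\zf$ context bites, and handle both: the dense sets must be indexed by an honest aleph for the hypothesis $|\mathcal D|\leq\kappa$ to be meaningful without $\ac$ (this is exactly what $\kappa=\aleph(Y)$ buys; indexing dense sets by $Y$ itself, as in the na\"{\i}ve partial-choice-function poset, would presuppose the well-orderability being proved), and the density of each $D_\alpha$ must be witnessed by a one-step extension requiring only a single unaided existential instantiation, which your one-pair extensions $q=p\cup\{(y,\alpha)\}$ ensure. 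The dichotomy built into $D_\alpha$ then works as you say: since $G$ is a filter, $F=\bigcup G$ is a partial injection, and if no condition in $G$ is total on $Y$ then genericity puts every $\alpha<\kappa$ into the range of $F$, so $F^{-1}$ injects $\kappa$ into $Y$, contradicting the definition of the Hartogs number; hence $Y$ injects into an ordinal and is well-orderable, giving $\ac$ via the well-ordering theorem. The orientation of your injections (from subsets of $Y$ into $\kappa$, rather than partial maps from $\kappa$ onto $Y$) is a cosmetic variation of no consequence.
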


Other results in this line of research have been found 
by Gary Shannon~\cite{shannon}, who found characterizations 
of K\"onig's Lemma and of the principle 
of Countable Choice as forcing axioms.

The second section of this paper contains yet another characterization 
of $\ac$ as a forcing axiom. This characterization consists of explaining 
how Zorn's Lemma can be rephrased so that it looks like a legitimate 
forcing axiom. This technique can also be used to obtain another 
characterization of $\dc$ as well, and we also improve Shannon's 
characterization of K\"onig's Lemma~\cite[Theorem 2]{shannon}. We 
further provide a characterization of a further weak choice 
principle, which is in the spirit of~\cite[Corollary 2]{shannon}. 
Then in the third section, we prove what we consider to be the main 
result of this paper: we characterize the Boolean Prime Ideal theorem 
in terms of a statement that is very close to a forcing axiom. This 
statement allows to prove consequences of the Boolean Prime Ideal 
theorem by using the same type of reasonings that any forcing 
axiom allows, and show three examples of this.

\section{Axiom of Choice, K\"onig's Lemma, and linearly ordered sets}

In this section, we provide some characterizations of certain weak principles 
of choice, including the Axiom of Choice itself, as forcing axioms. We first 
introduce a definition that meshes together the partial orders that are 
typically used in forcing axioms, with those that concern Zorn's Lemma.

\begin{definition}
We will say that a partial order $\mathbb P$ is \textbf{semi-separative} if 
for every $p\in\mathbb P$, either $p$ is minimal or $p$ has two incompatible 
extensions in $\mathbb P$.
\end{definition}

Together with the previous definition, the following lemma will be very 
useful for our characterization of $\ac$.

\begin{lemma}\label{minimaltogeneric}
Let $\mathbb P$ be any semi-separative partial order, and $G\subseteq\mathbb P$. 
Then $G$ intersects every dense set in $\mathbb P$ if and only if 
$G=\{q\in\mathbb P\big|p\leq q\}$ for some minimal element $p$.
\end{lemma}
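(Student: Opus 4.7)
For the easy direction $(\Leftarrow)$, the plan is to observe that when $p$ is minimal and $G = \{q \in \mathbb{P} : p \leq q\}$, the element $p$ itself belongs to $G$; given any dense $D$, applying density at $p$ yields some $q \in D$ with $q \leq p$, which minimality forces to equal $p$, exhibiting $p \in D \cap G$.

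For the converse $(\Rightarrow)$, I will begin with the decisive observation that $\mathbb{P} \setminus G$ cannot itself be dense, since $G$ cannot meet its own complement. This produces some $p \in \mathbb{P}$ all of whose extensions lie in $G$, so in particular $p \in G$ and $\{q : q \leq p\} \subseteq G$. The core of the argument is then to show that this $p$ must be minimal: were it not, semi-separativity would supply two incompatible extensions $q_1, q_2$ of $p$, both forced into $G$ by the previous step. The filter structure of $G$ (specifically, directedness) would then demand a common extension of $q_1, q_2$ in $G$, contradicting $q_1 \perp q_2$.

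Once the minimality of $p$ is established, the equality $G = \{q \in \mathbb{P} : p \leq q\}$ will follow by a routine double inclusion: upward closure of $G$ gives the $\supseteq$ direction starting from $p \in G$; for $\subseteq$, given any $q \in G$, directedness produces $r \in G$ with $r \leq p$ and $r \leq q$, and minimality of $p$ forces $r = p$, yielding $p \leq q$. I expect the hard step to be the minimality argument, as it is the unique point where semi-separativity genuinely contributes, and also where the filter structure of $G$ is indispensable---without directedness, the incompatibility of $q_1$ and $q_2$ would not translate into a contradiction, and indeed the conclusion would fail for arbitrary subsets meeting every dense set.
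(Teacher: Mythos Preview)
Your proposal is correct and follows essentially the same route as the paper's proof: both directions hinge on the same ideas (minimality forces $p$ into every dense set; conversely, non-density of $\mathbb P\setminus G$ produces a $p$ whose every extension lies in $G$, semi-separativity plus the filter property force $p$ to be minimal, and the filter axioms then pin down $G$ as the upward closure of $p$). You are also right to flag that the filter hypothesis on $G$ is essential in the $(\Rightarrow)$ direction---the paper's proof invokes it explicitly even though the lemma statement leaves it implicit.
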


\begin{proof}
Suppose first that $G=\{q\in\mathbb P\big|p\leq q\}$ for some minimal element 
$p$, in particular $p\in G$. Note that, since $p$ is minimal, then $p\in D$ 
for every dense $D\subseteq\mathbb P$. Hence $G$ intersects every dense set.

Conversely, suppose that $G$ is a filter meeting every dense subset 
$D\subseteq\mathbb P$. Therefore the set $\mathbb P\setminus G$ cannot be 
dense, for it does not meet $G$. So there is an element $p\in G$ such that 
every extension of $p$ is an element of $G$. Thus, it cannot be the case 
that $p$ has two incompatible extensions, since any two elements of $G$ must 
be compatible. Since $\mathbb P$ is semi-separative, $p$ must then be minimal. 
We now claim that $G=\{q\in\mathbb P\big|p\leq q\}$. Since $p\in G$, clearly 
$\{q\in\mathbb P\big|p\leq q\}\subseteq G$, now to prove the converse 
inclusion, let $q\in G$. Since $G$ is a filter, there exists an $r\in G$ 
that extends both $p$ and $q$, so by minimality of $p$ we obtain that 
$r=p$ and so $p\leq q$ for every $q\in G$.
\end{proof}

\begin{definition}
We will say that a partial order $\mathbb P$ is a \textbf{Zorn partial order} 
if every linearly ordered subset of $\mathbb P$ has a lower bound.
\end{definition}

Lemma~\ref{minimaltogeneric} promptly allows us to prove the following 
characterization of the Axiom 
of Choice. Recall that Zorn's Lemma, which is equivalent to $\ac$, is the statement 
that every Zorn partial order has a minimal element (it is usually stated in terms 
of upper bounds and maximal elements, but of course both versions are equivalent).

\begin{theorem}\label{characterizationofac}
In the theory $\zf$, the following two statements are equivalent:
\begin{enumerate}
\item $\ac$,
\item $\fa_\infty(\mathcal Z)$, where $\mathcal Z$ is the class of 
all semi-separative Zorn partial orders.
\end{enumerate}
\end{theorem}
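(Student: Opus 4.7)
The plan is to establish the two implications separately. For $(1) \Rightarrow (2)$, assume $\ac$. Given any $\mathbb P \in \mathcal Z$ and any family $\mathcal D$ of dense subsets of $\mathbb P$, Zorn's Lemma (equivalent to $\ac$) applied to the Zorn partial order $\mathbb P$ supplies a minimal element $p$. Lemma~\ref{minimaltogeneric} then identifies $G = \{q \in \mathbb P \mid p \leq q\}$ as a filter meeting every dense subset of $\mathbb P$; in particular, $G$ is $\mathcal D$-generic, as required.

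For $(2) \Rightarrow (1)$, I assume $\fa_\infty(\mathcal Z)$ and derive $\ac$ by producing a choice function on an arbitrary family $\{X_i\}_{i \in I}$ of nonempty sets. A preliminary reduction eliminates singleton factors: at any index $i$ with $|X_i| = 1$ the unique element of $X_i$ furnishes a canonical value with no choice required, so I may restrict attention to $I' = \{i \in I \mid |X_i| \geq 2\}$. Now define $\mathbb P$ to be the partial order of all partial choice functions $f$ with $\dom(f) \subseteq I'$ and $f(i) \in X_i$, ordered by $f \leq g$ iff $g \subseteq f$.

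The two key verifications that place $\mathbb P$ in $\mathcal Z$ are as follows: the union of any chain of partial choice functions is itself a partial choice function extending every member of the chain, so $\mathbb P$ is Zorn; and every non-minimal $f$, which must miss some $i \in I' \setminus \dom(f)$, admits the incompatible extensions $f \cup \{(i,a)\}$ and $f \cup \{(i,b)\}$ for any two distinct $a,b \in X_i$, so $\mathbb P$ is semi-separative. Applying $\fa_\infty(\mathcal Z)$ with $\mathcal D = \{D_i \mid i \in I'\}$, where $D_i = \{f \in \mathbb P \mid i \in \dom(f)\}$ is dense by one-step extension, yields a $\mathcal D$-generic filter $G$. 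The union $\bigcup G$ is a function (by pairwise compatibility of elements of $G$) with domain $I'$ (because $G$ meets each $D_i$), and combining it with the canonical values on $I \setminus I'$ yields a choice function on $I$. I expect the principal obstacle to be arranging the setup so that semi-separativity genuinely holds, which is precisely what forces the preliminary reduction to the case $|X_i| \geq 2$; the remaining forcing argument is a direct adaptation of the standard technique of assembling a desired object from a generic filter of partial approximations.
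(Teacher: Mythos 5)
Your proposal is correct and follows essentially the same route as the paper: the forward direction invokes Zorn's Lemma to get a minimal element and then Lemma~\ref{minimaltogeneric} to convert it into a fully generic filter, and the reverse direction uses the same poset of partial choice functions ordered by reverse inclusion, with the same preliminary reduction to sets of size at least two to secure semi-separativity and the same dense sets $D_i$ to ensure the generic filter's union has full domain. The only differences are cosmetic (an indexed family $\{X_i\}_{i\in I}$ instead of a family of sets, and canonical values at singleton indices rather than the paper's explicit patching with $\{\langle x,\bigcup x\rangle\}$).
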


\begin{proof}\hfil
\begin{description}
\item[$1 \Rightarrow 2$] Since $\ac$ implies Zorn's Lemma, which 
asserts that every Zorn partial order has a minimal element, in 
particular every semi-separative Zorn partial order $\mathbb P$ has a minimal 
element. By Lemma~\ref{minimaltogeneric}, this minimal element 
gives rise 
to a fully generic (i.e. $\mathcal D$-generic where $\mathcal D$ is 
the collection of \textit{all} dense subsets of $\mathbb P$) filter 
$G\subseteq\mathbb P$.

\item[$2 \Rightarrow 1$] The fact that Zorn's Lemma implies $\ac$, 
together with Lemma~\ref{minimaltogeneric}, should finish the 
proof, but we will be a bit more explicit. So let $X$ be a 
family of nonempty sets. If $X_1=\{x\in X\big||x|=1\}$ and 
$X_2=X\setminus X_1$, 
and $f$ is any choice function on 
$X_2$, then $f\cup\{\langle x,\bigcup x\rangle\big|x\in X_1\}$ 
will be a choice function on $X$, hence we assume without 
loss of generality that every element of $X$ has at least 
two elements. Now let
\begin{equation*}
\mathbb P=\{f:Y\longrightarrow\bigcup X\big|Y\subseteq X\wedge f\text{ is a choice function on }Y\},
\end{equation*}
with the order given by $f\leq g$ iff $f\supseteq g$. It is easy to 
see that $\mathbb P$ is a semi-separative 
Zorn partial order (the fact that non-minimal elements have 
two incompatible extensions follows from our assumption that 
every $x\in X$ has at least two elements), so by hypothesis 
there exists a generic filter $G\subseteq P$, and clearly 
letting $h=\bigcup G$ will yield that 
$h:X\longrightarrow\bigcup X$ is a choice function on $X$ 
(the fact that $X=\dom(h)$ follows from the fact that 
$D_x=\{f\in\mathbb P\big|x\in\dom(f)\}$ is dense for every 
$x\in X$).
\end{description}
\end{proof}

We will omit the proof of the following theorem, since it 
is completely analogous to that of 
Theorem~\ref{characterizationofac}, once one remembers 
that $\dc$ is equivalent (under $\zf$) to the statement 
that every partial order $\mathbb P$ such that every 
linearly ordered subset $X\subseteq\mathbb P$ is finite, 
has a minimal element\footnote{This statement is referred 
to as ``Form 43L'' in~\cite[p. 31]{howard-rubin}.}.

\begin{theorem}
In the theory $\zf$, the following two statements are equivalent:
\begin{enumerate}
\item $\dc$,
\item $\fa_\infty(\mathcal F)$, where $\mathcal F$ is the class of 
all semi-separative partial orders such that every linearly ordered 
subset is finite.
\end{enumerate}
\end{theorem}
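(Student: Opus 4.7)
The plan is to mirror the proof of Theorem~\ref{characterizationofac} line for line, with ``Zorn partial order'' replaced throughout by ``partial order all of whose linearly ordered subsets are finite,'' and with Zorn's Lemma replaced by the $\dc$-analogue mentioned in the footnote (Form 43L): $\dc$ holds in $\zf$ if and only if every partial order whose linearly ordered subsets are all finite has a minimal element.

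The direction $(1)\Rightarrow(2)$ is then immediate. Given any $\mathbb P\in\mathcal F$, Form 43L produces a minimal element $p\in\mathbb P$, and Lemma~\ref{minimaltogeneric} upgrades $p$ to the required fully generic filter $\{q\in\mathbb P \mid p\le q\}$.

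For $(2)\Rightarrow(1)$ it suffices to derive Form 43L, so let $\mathbb Q$ be a nonempty partial order whose chains are all finite, and look for a minimal element. Following the template of the $\ac$ proof, I would build a semi-separative chain-finite partial order whose minimal elements encode minimal elements of $\mathbb Q$. A natural candidate is
\begin{equation*}
\mathbb P = \bigl\{ \langle (q_0,\epsilon_0),\ldots,(q_n,\epsilon_n)\rangle : n\in\omega,\ q_0>q_1>\cdots>q_n \text{ in }\mathbb Q,\ \epsilon_i\in\{0,1\}\bigr\},
\end{equation*}
ordered by end-extension, so that longer sequences are stronger. A condition $s$ is minimal in $\mathbb P$ precisely when its last $\mathbb Q$-coordinate is minimal in $\mathbb Q$; and the binary labels furnish two incompatible extensions $s\frown(q_{n+1},0)$ and $s\frown(q_{n+1},1)$ whenever $q_n$ is non-minimal, so $\mathbb P$ is semi-separative. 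Any linearly ordered subset of $\mathbb P$ consists of prefixes of a single sequence whose $\mathbb Q$-coordinates form a chain in $\mathbb Q$, which is finite by hypothesis, so the chain in $\mathbb P$ is finite as well. Thus $\mathbb P\in\mathcal F$, and $\fa_\infty(\mathcal F)$ combined with Lemma~\ref{minimaltogeneric} produces a minimal $s\in\mathbb P$ whose last $\mathbb Q$-coordinate is the desired minimal element of $\mathbb Q$.

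The main obstacle is the $(2)\Rightarrow(1)$ direction: the partial order of choice functions used in the $\ac$ proof is automatically semi-separative as soon as each set in the family has at least two elements, but an arbitrary chain-finite $\mathbb Q$ may have no branching whatsoever. Doubling the tree of finite descending chains in $\mathbb Q$ by an extra binary label is what supplies the incompatible extensions at every non-minimal node, and crucially this does not introduce any new chains, because any two conditions disagreeing in even a single coordinate have no common extension.
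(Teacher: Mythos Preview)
Your proof is correct and follows exactly the route the paper indicates: the paper omits the argument entirely, merely pointing to Form~43L and declaring the proof ``completely analogous'' to that of Theorem~\ref{characterizationofac}. You have filled in the one place where a genuine idea is needed beyond that template---namely, an arbitrary chain-finite $\mathbb Q$ need not be semi-separative, and your passage to binary-labelled finite descending chains in $\mathbb Q$ is a clean way to manufacture the required incompatible extensions while preserving chain-finiteness and reflecting minimal elements.
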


We now introduce some notation regarding weak choice principles. 
The symbol $\ac(\kappa,\lambda)$ denotes the statement that every family 
of cardinality at most $\kappa$, each of whose elements is nonempty 
and has cardinality at most $\lambda$, has a choice function. Variations 
of this notation, where instead of a cardinal we have something 
like $\mathrm{WO}$ which stands for well-orderable, should be given 
the obvious meaning. Our next theorem is, in fact, a small improvement 
over a theorem of Shannon~\cite[Theorem 2]{shannon}, who showed that K\"onig's Lemma 
is equivalent to a forcing axiom, for $\omega$ many dense sets, over 
a certain class of partial orders with a quite complex definition. 
The authors noticed that the description of the class of partial 
orders involved could be made much simpler by removing an unnecessary 
condition whose use in the corresponding proof can be skipped.

\begin{theorem}
In the theory $\zf$, the following four statements are equivalent:
\begin{enumerate}
\item $\ac(\omega,<\omega)$,
\item K\"onig's Lemma,
\item every countable union of finite sets is countable,
\item $\fa_\omega(\mathcal C)$, where $\mathcal C$ is the class of 
all partial orders whose underlying set is a countable union of 
finite sets.
\end{enumerate}
\end{theorem}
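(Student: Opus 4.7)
My plan is to close the cycle $(1) \Rightarrow (2) \Rightarrow (3) \Rightarrow (4) \Rightarrow (1)$. The first three implications are classical in $\zf$, so I would treat them briefly; the genuinely new content lies in the forcing-axiom characterization $(4)$. For $(1)\Rightarrow(2)$, given a finitely branching infinite tree $T$, applying $\ac(\omega,<\omega)$ to the finite sets of ``acceptable next nodes at level $n$'' produces an infinite branch. For $(2)\Rightarrow(3)$, given a sequence $\langle F_n:n<\omega\rangle$ of (without loss of generality disjoint) finite sets, I would form the finitely branching infinite tree whose level-$k$ nodes are tuples $(\sigma_0,\dots,\sigma_k)$ where each $\sigma_i$ is a linear ordering of $F_i$; K\"onig's Lemma supplies a branch, and concatenating the resulting linear orders yields an enumeration of $\bigcup_n F_n$. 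For $(3)\Rightarrow(1)$, given nonempty finite $X_n$, I would apply $(3)$ to $\bigsqcup_n\{n\}\times X_n$ and, in any resulting enumeration, send each $n$ to the least-indexed element of $\{n\}\times X_n$.

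The central new implication is $(3)\Rightarrow(4)$. Let $\mathbb P\in\mathcal C$ and $\mathcal D=\{D_n:n<\omega\}$. By $(3)$, $\mathbb P$ is countable, so I fix an enumeration $\mathbb P=\{p_k:k<\omega\}$ and recursively build a descending sequence $q_0\geq q_1\geq\cdots$ by letting $q_{n+1}$ be the $p_k$ of \emph{least index} $k$ satisfying $p_k\leq q_n$ and $p_k\in D_n$; density of $D_n$ guarantees existence. The upward closure $G=\{r\in\mathbb P:(\exists n)(q_n\leq r)\}$ is then a $\mathcal D$-generic filter. For $(4)\Rightarrow(1)$, given nonempty finite sets $\{X_n:n<\omega\}$, I would let $\mathbb P$ be the set of all functions $f$ whose domain is an initial segment $\{0,\dots,m-1\}$ of $\omega$ satisfying $f(k)\in X_k$ for every $k<m$, ordered by reverse inclusion. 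Then $\mathbb P=\bigcup_n\prod_{k<n}X_k$ exhibits $\mathbb P$ as a countable union of finite sets, hence $\mathbb P\in\mathcal C$; each $D_n=\{f\in\mathbb P:n\in\dom(f)\}$ is dense, and the union of any $\mathcal D$-generic filter yields a choice function on $\omega$.

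The main potential obstacle is ensuring that no hidden use of choice sneaks in. In $(3)\Rightarrow(4)$ this is handled by fixing a single enumeration of $\mathbb P$ at the outset, after which the ``least-index'' rule renders every subsequent step canonical and the recursion proceeds entirely within $\zf$. In $(4)\Rightarrow(1)$, verifying density of $D_n$ amounts to extending a partial function by values taken from at most finitely many nonempty finite sets, which is a finite task $\zf$ carries out directly; and $\mathbb P$ is presented as a canonically indexed union of the canonically defined finite sets $\prod_{k<n}X_k$, so witnessing membership in $\mathcal C$ requires no arbitrary selections either.
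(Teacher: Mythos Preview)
Your proposal is correct and follows essentially the same approach as the paper: the equivalences among (1), (2), (3) are treated as classical (the paper simply cites them), while your arguments for $(3)\Rightarrow(4)$ (enumerate $\mathbb P$ and build a descending sequence by a least-index rule) and $(4)\Rightarrow(1)$ (partial choice functions on initial segments, with dense sets $D_n=\{f:n\in\dom(f)\}$) match the paper's proof almost verbatim. One trivial slip: at the end of $(4)\Rightarrow(1)$ you write ``a choice function on $\omega$'' where you mean a choice function on $\{X_n:n<\omega\}$.
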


\begin{proof}\hfil
\begin{description}
\item[$1 \Leftrightarrow 2 \Leftrightarrow 3$] These equivalences are well-known (see 
e.g.~\cite[pp.19--20]{howard-rubin}).
\item[$3 \Rightarrow 4$]
If $\mathbb P\in\mathcal C$, then our assumption implies that 
$\mathbb P$ is countable, and in particular well-orderable. Hence 
we have a choice function on the powerset of $\mathbb P$, so whenever we have 
a sequence $\langle D_n\big|n<\omega\rangle$ of countably many dense sets, 
we can use the aforementioned choice function to recursively define a 
decreasing sequence $\langle p_n\big|n<\omega\rangle$ such that $p_n\in D_n$. 
Clearly $G=\{q\in\mathbb P\big|(\exists n<\omega)(p_n\leq q)\}$ will be a 
$\{D_n\big|n<\omega\}$-generic filter.

\item[$4 \Rightarrow 1$]
Let $\{X_n\big|n<\omega\}$ be a countable family of nonempty finite sets, 
indexed by $\omega$. We let 
\begin{equation*}
\mathbb P=\left\{f:n\longrightarrow\bigcup_{k<n}X_k\bigg|n<\omega\wedge f\text{ is a choice 
function on }\{X_k\big|k<n\}\right\}.
\end{equation*}
It can be easily verified that the partially ordered set 
$\mathbb P$ is the countable union 
of the finite sets $F_n=\{f\in\mathbb P\big|\dom(f)=n\}$. 
Hence if we consider, for every $n<\omega$, the dense set 
$D_n=\{p\in\mathbb P\big|n\in\dom(p)\}$, then we obtain a 
$\{D_n\big|n<\omega\}$-generic set $G$. It is straightforward to verify 
that $\bigcup G$ is a choice function on $\{X_n\big|n<\omega\}$.
\end{description}
\end{proof}

The last result of this section is very much in the spirit 
of~\cite[Corollary 2]{shannon}. This result consists of a 
characterization of the principle $\ac(\lo,<\omega)$, which 
asserts the existence of choice functions for every linearly 
orderable collection of nonempty finite sets. This weak 
choice principle is implied both by the Ordering Principle 
(asserting that every set can be linearly ordered), and 
by $\ac(\infty,<\omega)$ (which 
asserts the existence of a choice function on any 
arbitrary family of nonempty finite sets); and it 
implies $\ac(\wo,<\omega)$ (asserting the existence 
of a choice function on any well-orderable family 
of nonempty finite sets); and none of these 
implications is reversible~\cite[Corollary 4.6]{truss}.

In order to 
state the equivalence of this weak choice principle with 
something that resembles a forcing axiom, we will need to 
be more flexible with our notion of forcing axiom, and 
consider preorders instead of partial orders. Given 
$n<\omega$, we will denote by $\mathcal L_n$ the class 
of all preorders whose underlying set is the union of a 
pairwise disjoint linearly orderable family of finite 
sets, such that every 
antichain has size at most $n$. We will also denote 
$\mathcal L=\bigcup_{n<\omega}\mathcal L_n$. Also, 
a superscript $\lo$ in the statement of a forcing 
axiom will denote the additional assertion that the 
relevant filter can be taken to be linearly 
orderable.

\begin{theorem}\label{linearorder}
Under the theory $\zf$, the following four statements are equivalent:
\begin{enumerate}
\item $\ac(\lo,<\omega)$,
\item the union of a pairwise disjoint linearly orderable family of 
finite sets is linearly orderable,
\item $\fa_\infty^\lo(\mathcal L)$,
\item there exists an $n$, $1\leq n<\omega$, such that $\fa_\infty^\lo(\mathcal L_n)$ 
holds.
\end{enumerate}
\end{theorem}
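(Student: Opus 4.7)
The plan is to prove the equivalence by establishing $(1)\Leftrightarrow(2)$ directly and closing the cycle $(1)\Rightarrow(3)\Rightarrow(4)\Rightarrow(2)$. The equivalence $(1)\Leftrightarrow(2)$ is a standard manipulation: from (1), one applies $\ac(\lo,<\omega)$ to the linearly orderable family $\{L(X_i):i\in I\}$, where $L(X_i)$ is the finite set of linear orders on $X_i$, to select a linear order of each block, and then concatenates along the linear order of the family; conversely, any linear order on $\bigcup_iX_i$ yields a choice function via minima. The implication $(3)\Rightarrow(4)$ is immediate since $\mathcal L_1\subseteq\mathcal L$, so we may take $n=1$.

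For $(4)\Rightarrow(2)$: fix some $n$ witnessing (4); since $\mathcal L_1\subseteq\mathcal L_n$, the principle $\fa_\infty^\lo(\mathcal L_1)$ holds. Given a linearly orderable pairwise disjoint family $\{X_i\}$ of nonempty finite sets, equip $\bigsqcup_iX_i$ with the trivial preorder in which $p\leq q$ for all $p,q$; every pair is then compatible, so the preorder lies in $\mathcal L_1$, and every singleton is dense. The resulting linearly orderable fully generic filter must therefore equal $\bigsqcup_iX_i$, which is thereby linearly orderable, yielding (2).

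The main work is $(1)\Rightarrow(3)$. Given $\mathbb P\in\mathcal L_n$ with underlying set $\bigsqcup_iX_i$ and any family $\mathcal D$ of dense sets, let $m\leq n$ be the maximum antichain size in $\mathbb P$ (the set of attained sizes is a nonempty subset of $\{1,\dots,n\}$, so it has a maximum in $\zf$), fix an antichain $A$ of size $m$, and pick any $a\in A$. The key lemma will be that $\mathbb P^-_a:=\{p\in\mathbb P:p\leq a\}$ is directed. Indeed, any extension of $a$ must be incompatible with every $a_j\in A\setminus\{a\}$, since a common witness to compatibility would also extend $a$ and thereby contradict $a\perp a_j$; so if $p,q\leq a$ were themselves incompatible, $\{p,q\}\cup(A\setminus\{a\})$ would be an antichain of size $m+1$, contradicting the choice of $m$.

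I will then show that $V_a:=\{q\in\mathbb P:q\not\perp a\}$ is a linearly orderable fully generic filter. Upward closure is immediate, since any witness $r\leq q,a$ to $q\not\perp a$ still satisfies $r\leq q'$ whenever $q\leq q'$. Downward directedness reduces to directedness of $\mathbb P^-_a$: given $q_1,q_2\in V_a$ with witnesses $r_i\leq q_i,a$, a common extension $s\leq r_1,r_2$ inside $\mathbb P^-_a$ satisfies $s\leq q_1,q_2$ and $s\in V_a$. Genericity is similarly direct: for a dense $D$, any extension of $a$ inside $D$ lies in $\mathbb P^-_a\subseteq V_a$. Finally, $V_a$ inherits a linear order from $\mathbb P$, which is linearly orderable by $(2)$. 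The main obstacle is the key lemma, where one must correctly leverage the maximality of $A$ to convert a hypothetical incompatible pair below $a$ into an antichain of size $m+1$ via the rest of $A$.
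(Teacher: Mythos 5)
Your proposal is correct, and its mathematical core is the same as the paper's: in the main implication you fix an antichain $A$ of maximum size $m$ (the attained sizes form a nonempty subset of $\{1,\dots,n\}$, so the maximum exists in $\zf$), pick $a\in A$, and show the compatibility cone $V_a=\{q\in\mathbb P\,:\,q\not\perp a\}$ is a linearly orderable fully generic filter via the ``antichain of size $m+1$'' contradiction, which is exactly the paper's construction of $G_p$ in its step $(2)\Rightarrow(3)$. The genuine differences are organizational. First, you decompose as $(1)\Leftrightarrow(2)$ plus $(1)\Rightarrow(3)\Rightarrow(4)\Rightarrow(2)$ rather than the paper's single cycle $(1)\Rightarrow(2)\Rightarrow(3)\Rightarrow(4)\Rightarrow(1)$; your $(1)\Rightarrow(3)$ is legitimate because you invoke $(2)$, already proved equivalent to $(1)$, to linearly order $\mathbb P$ and hence $V_a$. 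Second, and more substantively, the paper only establishes that $G_p$ is $2$-linked and then obtains a common extension of $q,r$ \emph{inside} $G_p$ by a forcing-style maneuver, meeting the auxiliary dense set $D=\{s\,:\,s\perp q\vee s\perp r\vee(s\leq q\wedge s\leq r)\}$; your key lemma that the cone $\{p\,:\,p\leq a\}$ is directed delivers the common extension directly from the witnesses $r_i\leq q_i,a$, dispensing with the auxiliary dense set --- marginally more economical and arguably more transparent about the role of the maximal antichain. Third, in the closing implication both you and the paper use the trivial preorder, but you note that all singletons are dense, so the linearly orderable generic filter equals the whole union, yielding $(2)$ outright, whereas the paper's $(4)\Rightarrow(1)$ extracts a choice function from the $L$-minima of $G\cap x$; both verifications are routine. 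One shared (and easily fixed) looseness with the paper: in $(1)\Rightarrow(2)$ one should index the sets of linear orders injectively, e.g.\ replace $L(X_i)$ by $\{X_i\}\times L(X_i)$, so the family to which $\ac(\lo,<\omega)$ is applied is genuinely linearly orderable.
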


\begin{proof}\hfil
\begin{description}
\item[$1\Rightarrow 2$] Let $X$ be a pairwise disjoint set, linearly 
ordered by $\leq$, each 
of whose elements is finite nonempty. Assuming $\ac(\lo,<\omega)$ we will 
proceed to construct a linear order $\preceq$ on $\bigcup X$. For each $x\in X$, 
define $L_x=\{L\subseteq x\times x\big|L\text{ is a linear order on }x\}$. 
Then each element in the linearly orderable family $L=\{L_x\big|x\in X\}$ 
is finite nonempty, so by $\ac(\lo,<\omega)$ it is possible to obtain 
a choice function $f$ on $L$. This allows us to define the relation
\begin{equation*}
y\preceq z \iff x_y\leq x_z\text{ or }(x_y=x_z\text{ and }y\ f(x_y)\ z)\text{ or }y=z,
\end{equation*}
(where $x_y$ denotes the unique $x\in X$ such that $y\in x$) on $\bigcup X$. 
It is straightforward to check that $\preceq$ linearly orders $\bigcup X$.

\item[$2 \Rightarrow 3$] Let $\mathbb P\in\mathcal L$. The first thing to 
notice, is that by assumption there exists a linear order $L$ on $\mathbb P$. 
Since $\mathbb P\in\mathcal L_n$ for some $n<\omega$, in particular the size 
of the antichains of $\mathbb P$ is bounded above. Thus we can take an antichain 
$A\subseteq\mathbb P$ of maximum possible cardinality. Take any $p\in A$, 
and define $G_p=\{q\big|q\not\perp p\}$. $G_p$ is linearly orderable by 
$L\upharpoonright G_p$, and we claim that $G_p$ is also a generic filter. 
If $D$ is any dense set, there exists a $q\in D$ with $q\leq p$, so 
$q\in G_p$ by definition and hence $G_p$ meets $D$. Now to see that $G_p$ 
is a filter, let $q,r\in G_p$. Then by definition both $q$ and $r$ are 
compatible with $p$, so there are extensions $q'\leq q$ and $r'\leq r$ that 
extend $p$ as well. If $q'$ and $r'$ were incompatible, the set 
$(A\cup\{q',r'\})\setminus\{p\}$ would be an antichain of cardinality strictly 
larger than $|A|$, contradicting that $A$ has maximum possible cardinality. 
Therefore $q'\not\perp r'$ and so $q\not\perp r$. Since 
$D=\{s\in\mathbb P\big|s\perp q\vee s\perp r\vee(s\leq q\wedge s\leq r)\}$ 
is a dense set, and we already showed that $G_p$ must intersect every dense 
set, and that any two elements in $G_p$ must be compatible, we can conclude 
that there exists an $s\in G_p$ such that $s\leq q$ and $s\leq r$. Thus 
$G_p$ is a generic linearly orderable filter.

\item[$3 \Rightarrow 4$] This is immediate.
\item[$4 \Rightarrow 1$] Let $X$ be a pairwise disjoint, linearly orderable 
family of nonempty finite sets. We preorder $\mathbb P=\bigcup X$ with the 
somewhat trivial preorder that makes $p\leq q$ for every $p,q\in\mathbb P$. 
Then every antichain in $\mathbb P$ is a singleton, so 
$\mathbb P\in\mathcal L_n$. Since every $x\in X$ is a dense set, when 
considered as a subset of $\mathbb P$, our hypothesis gives us a filter 
$G\subseteq\mathbb P$, meeting every $x\in X$, and equipped with a linear 
order $L$. Hence we can define 
$z_x=\min_L(G\cap x)$ for every $x\in X$ (it is possible to take $L$-minima 
because each $G\cap x$ is finite and nonempty), now the family 
$\{z_x\big|x\in X\}$ will be a selector for $X$.
\end{description}
\end{proof}

\section{A new characterization of the $\bpi$}

The Boolean Prime Ideal Theorem, denoted by $\bpi$, is well-known as a statement weaker 
than $\ac$ that still suffices to carry out many of the proofs that require $\ac$. In 
this section we will prove that the $\bpi$ is equivalent to a certain statement that 
deals with partially ordered sets and the existence of certain filter-like families 
therein. Afterwards we will show how this statement allows us to prove some 
consequences of $\bpi$ by using the same sort of ideas as in a proof that uses  
Zorn's Lemma, or Martin's Axiom. We will lay down a couple of definitions that 
we will need in order to state the version of the $\bpi$ that we will use for our 
proof.

\begin{definition}
Let $X$ be a nonempty set. A family $M$ consisting of functions from finite subsets 
of $X$ into $2$ will be called a \textbf{binary mess} on $X$ provided that it 
satisfies
\begin{enumerate}
\item $M$ is closed under restrictions, this is, if $s\in M$ and $F\subseteq\dom(s)$ 
 then $s\upharpoonright F\in M$, and
\item for every finite $F\subseteq X$, there exists an $s\in M$ such that 
 $\dom(s)=F$.
\end{enumerate}
\end{definition}

\begin{definition}
Let $X$ be a nonempty set, and let $M$ be a binary mess on $X$. We will say that 
a function $f:X\longrightarrow2$ is \textbf{consistent} with $M$ if for every 
finite $F\subseteq X$, $f\upharpoonright F\in M$.
\end{definition}

These definitions allow us to state the following weak choice principle, 
which is equivalent to the $\bpi$~\cite[Theorem 2.2]{jech}.

\begin{tabbing}
  The Consistency Principle \quad \= For every nonempty set $X$, and every 
binary mess $M$ \\
  \> on $X$, there exists a function $f:X\longrightarrow2$ that is \\
  \> consistent with $M$.\\
\end{tabbing}

In our search for an equivalent of $\bpi$, we were not able to find a 
statement which falls neatly into the template that we have been using 
for a forcing axiom. Inspired by Cowen's generalization of K\"onig's 
Lemma~\cite[Theorems 1 and 8]{cowen}, we were able to put together the 
equivalence in Theorem~\ref{maintheorem} below, though we first state 
a couple of definitions.

\begin{definition}
Let $\mathbb P$ be a partially ordered set.
\begin{enumerate}
\item We will say that a subset $G\subseteq\mathbb P$ is \textbf{2-linked} 
if every two elements of $G$ are compatible.
\item We will say that a family $\mathcal A$ of antichains of $\mathbb P$ 
is \textbf{centred} if for every choice of finitely many antichains 
$A_1,\ldots,A_n\in\mathcal A$, there exist elements $p\in\mathbb P$ and 
$a_1\in A_1,\ldots,a_n\in A_n$ such that $p\leq a_k$ for all 
$1\leq k\leq n$.
\end{enumerate}
\end{definition}

Under $\zfc$, every dense set gives rise to a maximal antichain, and 
viceversa. Additionally, the key property of filters is not so much 
that they are closed upwards, or that any two of its elements have a 
common extension in the filter itself, but rather, just that any two 
of its elements are compatible. This is what allows us to properly 
glue together the elements of a filter in order to obtain the desired 
object, in most of the proofs that use forcing axioms. Therefore 
we claim that the characterization in Theorem~\ref{maintheorem} 
below does not deviate excessively from the usual template of 
a forcing axiom.

\begin{theorem}\label{maintheorem}
The theory $\zf$ proves that the following two statements are 
equivalent:
\begin{enumerate}
\item $\bpi$,
\item for every partial order $\mathbb P$, and every centred family $\mathcal A$ 
of finite antichains, there exists an $\mathcal A$-generic 2-linked 
subset $G\subseteq\mathbb P$.
\end{enumerate}
\end{theorem}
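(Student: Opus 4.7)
The plan is to derive both implications by reducing to the Consistency Principle formulation of $\bpi$ recalled above.

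For the direction $(1)\Rightarrow(2)$, given a partial order $\mathbb{P}$ and a centred family $\mathcal{A}$ of finite antichains, I would set $X=\bigcup\mathcal{A}$ and define a binary mess $M$ on $X$ by putting a function $s\colon F\to 2$ (with $F\subseteq X$ finite) into $M$ precisely when the set $G_s:=\{x\in F:s(x)=1\}$ is 2-linked in $\mathbb{P}$ and meets every antichain $A\in\mathcal{A}$ with $A\subseteq F$. Closure under restrictions is routine, since both properties survive when one shrinks $F$. For the existence of some $s\in M$ with $\dom(s)=F$, one uses that $\mathcal{A}_F:=\{A\in\mathcal{A}:A\subseteq F\}$ is a finite subfamily of $\mathcal{A}$ (because $F$ is finite), applies centredness to obtain $p\in\mathbb{P}$ and $a_A\in A$ for $A\in\mathcal{A}_F$ with $p\leq a_A$, and sets $s(x)=1$ iff $x\in\{a_A:A\in\mathcal{A}_F\}$. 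The function $f\colon X\to 2$ consistent with $M$ provided by the Consistency Principle then yields $G:=f^{-1}(1)\subseteq\mathbb{P}$, which is 2-linked (apply consistency to $F=\{x,y\}\subseteq G$) and $\mathcal{A}$-generic (apply it to $F=A\in\mathcal{A}$).

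For $(2)\Rightarrow(1)$, I would derive the Consistency Principle. Given a binary mess $M$ on $X$, let $\mathbb{P}:=M$ ordered by reverse inclusion. For each finite $F\subseteq X$ put $A_F:=\{s\in M:\dom(s)=F\}$; this is a finite antichain in $\mathbb{P}$, and the family $\mathcal{A}:=\{A_F:F\subseteq X \text{ finite}\}$ is centred: given $F_1,\ldots,F_n$, pick $p\in M$ with $\dom(p)=F_1\cup\cdots\cup F_n$ (using the second mess axiom) and observe that $a_i:=p\upharpoonright F_i\in A_{F_i}$ with $p\leq a_i$ for every $i$. Hypothesis (2) then delivers a 2-linked $\mathcal{A}$-generic $G\subseteq\mathbb{P}$. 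Since $A_F$ is an antichain and $G$ is 2-linked, $G\cap A_F$ has \emph{exactly one} element, say $a_F$; define $f(x):=a_{\{x\}}(x)$ for each $x\in X$.

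The main obstacle is the final coherence step, namely showing that $f\upharpoonright F\in M$ for every finite $F\subseteq X$. This is precisely where the choice of antichains $A_F$ indexed by \emph{finite subsets} of $X$ (rather than just singletons $\{x\}$) pays off: any two elements $a_F,a_{F'}\in G$ are compatible in $\mathbb{P}$, hence agree on $F\cap F'$, so in particular $a_F(x)=a_{\{x\}}(x)=f(x)$ for every $x\in F$, giving $a_F=f\upharpoonright F\in M$. A naive attempt using only the singleton-domain antichains would produce pairwise compatibility of the chosen singletons, but not the global compatibility needed for consistency with $M$; the richer antichains convert pairwise compatibility in $G$ into agreement on overlapping finite domains, which is exactly what the Consistency Principle demands.
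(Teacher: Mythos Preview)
Your proof is correct and follows essentially the same strategy as the paper's: in $(1)\Rightarrow(2)$ you build a binary mess whose elements code finite 2-linked selections meeting the contained antichains (you take the mess on $X=\bigcup\mathcal A$ while the paper takes it on all of $\mathbb P$, a cosmetic difference), and in $(2)\Rightarrow(1)$ you use $\mathbb P=M$ with the antichains $A_F$ and read off $f$ from the generic 2-linked set, exactly as the paper does. Your final coherence paragraph makes explicit the one-line step the paper summarizes as ``it is easy to see that $G\cap A_F=\{f\upharpoonright F\}$''.
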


\begin{proof}\hfil
\begin{description}
\item[$1 \Rightarrow 2$] Assume $\bpi$, which is equivalent to the 
Consistency Principle, and let $\mathbb P$ be a partial order, with a 
centred family $\mathcal A$ of finite antichains. Define a binary mess 
$M$ on $\mathbb P$ by
\begin{eqnarray*}
M & = & \left\{m:F\rightarrow2\big| F\in[\mathbb P]^{<\omega}
\wedge(\forall x,y\in F)(m(x)=m(y)=1\Rightarrow x\not\perp y)\right. \\
 & & \ \ \ \ \ \ \ \ \ \ \ \ \ \ \ \ \ \ \ \ \ \ \ \ \ \ \ \ \ \ \ \ \ \ \ \ \left. \wedge(\forall A\in\mathcal A)(A\subseteq F\Rightarrow(\exists x\in A)(m(x)=1))\right\}.
\end{eqnarray*}
It is readily checked that $M$ is closed under restrictions. Now if 
$F\subseteq\mathbb P$ is finite, then there are only finitely many 
elements $A\in\mathcal A$ with $A\subseteq F$, let those be 
$A_1,\ldots,A_n$. Since $\mathcal A$ is centred, there exists a $p\in\mathbb P$ 
and $a_k\in A_k$ for each $1\leq k\leq n$ such that $p\leq a_k$. We 
define $m:F\longrightarrow2$ by $m(x)=1\iff(\exists k\leq n)(x=a_k)$. 
It is readily checked that $m\in M$, thus $M$ is indeed a binary mess. 
So the Consistency Principle ensures the existence of a function 
$f:\mathbb P\longrightarrow2$ consistent with $M$. Letting 
$G=\{x\in\mathbb P\big|f(x)=1\}$ will give us an $\mathcal A$-generic 2-linked 
set.

\item[$2 \Rightarrow 1$] Let $M$ be a binary mess on some 
nonempty set $X$. We partially order $M$ itself by reverse 
inclusion, and for each finite $F\subseteq X$ we let 
$A_F=\{m\in M\big|\dom(m)=F\}$. Then each of the $A_F$ is a 
finite antichain (in fact, $|A_F|\leq 2^{|F|}$). We define 
$\mathcal A=\{A_F\big|F\in[X]^{<\omega}\}$, and proceed to 
verify that $\mathcal A$ is indeed a centred family of 
antichains, so let $A_{F_1},\ldots,A_{F_n}\in\mathcal A$. 
Since $M$ is a binary mess, there exists an $m\in M$ with 
$\dom(m)=F_1\cup\cdots\cup F_n$. For each $1\leq k\leq n$, 
we have that $m_k=m\upharpoonright F_k\in A_{F_k}$ is an 
element extended by $m$, thus $\mathcal A$ is a centred 
family. Therefore, by assumption there exists an 
$\mathcal A$-generic 2-linked family $G$.

Now we claim that defining $f=\bigcup G$ yields a function 
consistent with $M$. Note first that $f$ must be a function, 
since $G$ is linked. Furthermore, for each $x\in X$, $G$ must 
meet $A_{\{x\}}$, which implies that $x\in\dom(f)$. Hence 
$\dom(f)=X$. Lastly, $f$ is consistent with $M$ because, for 
each finite $F\subseteq X$, $G$ must intersect $A_F$, and it 
is easy to see that $G\cap A_F$ consists of the single 
element $f\upharpoonright F\in M$.
\end{description}
\end{proof}

We now present three examples of proofs using the equivalence 
found in Theorem~\ref{maintheorem}, in order to illustrate 
how we can use this new equivalence to prove consequences 
of $\bpi$ in the spirit of proofs that use forcing axioms.
Our first example is the proof of the Ordering Principle.

\begin{example}
The ordering principle is the statement that every set can 
be linearly ordered. This principle is implied by the $\bpi$, 
and the implication is not reversible\footnote{Mathias~\cite{mathias} 
proved that the Ordering Principle does not imply the Order Extension 
Principle (the statement that every partial ordering can be extended 
to a total ordering), which is another consequence of $\bpi$.}. 
Thus we will prove the ordering principle, in $\zf$, under the 
assumption that statement (2) in Theorem~\ref{maintheorem} 
holds.

So let X be an arbitrary (nonempty) set. Partially order 
the set
\begin{equation*}
\mathbb P=\{L\big|L\text{ is a linear order, and }\dom(L)\in [X]^{<\omega}\}
\end{equation*}
by reverse inclusion, so that $L\leq L'$ iff $L\supseteq L'$. 
For each $F\in [X]^{<\omega}$ we let $A_F$ be the collection 
of all linear orders on $F$, which is a finite (in fact, of 
size $|F|!$) antichain in $\mathbb P$. So the family 
$\mathcal A=\{A_F\big|F\in[X]^{<\omega}\}$ consists of 
finite antichains; we will now proceed to prove that it is 
centred, so consider finitely many elements 
$A_{F_1},\ldots,A_{F_n}\in\mathcal A$. Let $L$ be a 
linear order on the finite set $F_1\cup\cdots\cup F_n$, 
then $L$ simultaneously extends each of the elements 
$L\upharpoonright F_k\in A_{F_k}$, for $1\leq k\leq n$, 
and therefore $\mathcal A$ is linked. Thus we obtain 
an $\mathcal A$-generic 2-linked set $G$. It is readily 
checked that $\bigcup G$ is, in fact, a linear order 
on $X$.
\end{example}

\begin{example}
We shall now consider a statement which is actually known to be 
equivalent to the $\bpi$. We will show only half of the equivalence, 
the half that illustrates the use of our new characterization of 
the $\bpi$. 

Let $\{A_i\big|i\in I\}$ be a collection of finite sets, and let $S$ 
be a symmetric binary relation on $A=\bigcup_{i\in I} A_i$. We will 
say that a function $f$ with range contained in $A$ is \textbf{$S$-consistent} 
if $(\forall x,y\in\dom(f))(f(x)\ S\ f(y))$. The statement that we will 
prove, assuming $\bpi$, is the following: If for every finite $W \subset I$ 
there is an $S$-consistent choice function for $\{A_i\big|i \in W\}$, then 
there is an $S$-consistent choice function for the whole family 
$\{A_i\big|i \in I\}$.\footnote{The fact that the $\bpi$ is equivalent 
to this statement for all collections of finite sets $\{A_i\big|i\in I\}$ 
and all symmetric relations $S$ on $\bigcup_{i\in I}A_i$ is proved 
in~\cite[Theorem 2*]{los-ryll}.}

For this, we let 
\begin{eqnarray*}
\mathbb P & = & \{p\big|p:W\longrightarrow A\text{ for some finite }W\subseteq I\text{ and }\\
 & & \ \ \ \ \ \ p\text{ is an }S\text{-consistent choice function on }\{A_i\big|i\in W\}\},
\end{eqnarray*}
and we partially order $\mathbb P$ by reverse extension (i.e. 
$p\leq q$ iff $p\supseteq q$). For each finite $W\subseteq I$, 
the family $A_W=\{p\in\mathbb P\big|\dom(p)=W\}$ is clearly a 
finite antichain, and for each finite collection of these 
antichains, $A_{W_1},\ldots,A_{W_n}$, it is clear that any 
$S$-consistent choice function $p$ on $\{A_i\big|i\in W_1\cup\cdots\cup W_n\}$ 
(there exists at least one by hypothesis) will extend the 
elements $p\upharpoonright W_i\in A_{W_i}$ for every $1\leq i\leq n$. 
Therefore, letting $\mathcal A=\{A_W\big|W\in[I]^{<\omega}\}$ yields 
a centred family of finite antichains. Thus by the assumption that 
$\bpi$ holds and Theorem~\ref{maintheorem}, we can obtain an 
$\mathcal A$-generic 2-linked family $G$. We claim that $f=\bigcup G$ 
is an $S$-consistent choice function on $\{A_i\big|i\in I\}$. It is 
easy to see that $f$ is a function because $G$ is 2-linked. Moreover, 
$\dom(f)=I$ since $G$ must intersect each of the $A_{\{i\}}$, for all 
$i\in I$. Finally, given any two $i,j\in I$, we can derive from the 
fact that $G$ meets $A_{\{i,j\}}$ that 
$f\upharpoonright\{i,j\}\in G\subseteq\mathbb P$, and therefore we 
must have that $f(i)\ S\ f(j)$. Hence $f$ is $S$-consistent, and we are 
done.
\end{example}

Our last example will be a proof of the Hahn-Banach theorem, 
which is an extremely well-known result. This theorem is 
implied by the $\bpi$~\cite[Section 5]{los-ryll}, though the implication 
is not reversible~\cite{pincus}.

\begin{example}
We will prove the Hahn-Banach theorem from statement (2) in 
Theorem~\ref{maintheorem}, so we will need to introduce some 
terminology. If $V$ is a real vector space, a \textbf{Minkowski 
functional} 
on $V$ is a function $p:V\longrightarrow\mathbb R$ satisfying 
$p(x+y)\leq p(x)+p(y)$ and $p(tx)=tp(x)$ for every $x,y\in V$ 
and every positive $t\in\mathbb R$. A \textbf{linear functional} 
is simply a linear transformation $f:V\longrightarrow\mathbb R$. 
The Hahn-Banach theorem is the following statement: for every 
real vector space $V$, for every Minkowski functional 
$p:V\longrightarrow\mathbb R$ 
on $V$ and every linear functional $f:W\longrightarrow\mathbb R$ 
defined on a subspace $W$ of $V$, such that $(\forall x\in W)(f(x)\leq p(x))$, 
there exists a linear functional $\hat{f}:V\longrightarrow\mathbb R$ 
extending $f$ such that $(\forall x\in V)(\hat{f}(x)\leq p(x))$.

In order to carry out our proof, we would like to define a partial 
order of approximations to the desired functional, but doing this 
carelessly gets us at risk of not being able to 
obtain small enough antichains. So our approximations will 
consist on \textit{approximating} (though not specifying) the 
value of the desired functional on finitely many vectors, for 
which we need to deal with certain special kinds of intervals. 
Given a closed interval $I=[a,b]\subseteq\mathbb R$ and 
$n<\omega$, we will define a \textbf{dyadic 
subinterval of depth $n$} of $I$ to be any 
interval of the form $\left[a+\frac{(b-a)k}{2^n},a+\frac{(b-a)(k+1)}{2^n}\right]$ 
where $k\in\{0,1,\ldots,2^n-1\}$. A \textbf{dyadic subinterval} 
of $I$ will be a dyadic subinterval of depth some $n<\omega$; 
we denote by $I_n$ the collection of all ($2^n$ many) dyadic 
subintervals of depth $n$ of $I$, and by $I_\infty=\bigcup_{n<\omega}I_n$ 
the collection of all dyadic subintervals of $I$ (of any depth). 
Dealing with dyadic subintervals will be very useful because 
there are only finitely many dyadic subintervals of any given 
depth, and because, given any closed interval $I$, any two of 
its dyadic subintervals $J,J'\in I_\infty$ will satisfy that 
either $|J\cap J'|\leq 1$, or $J\subseteq J'$ or $J'\subseteq J$.

We can now finally proceed to the proof. Assume the 
corresponding hypotheses, i.e. let $V$ be a linear 
vector space, $p:V\longrightarrow\mathbb R$ a 
Minkowski functional, and $f:W\longrightarrow\mathbb R$ 
be a linear functional, defined on the (proper) subspace $W$ 
of $V$, such that $(\forall x\in W)(f(x)\leq p(x))$. For 
each $x\in V$ we define the closed interval 
$I(x)=[-p(-x),p(x)]$. Note 
that if we manage to define $\hat{f}$ in such a way that 
$\hat{f}(x)\in I(x)$ for every $x$, we will have taken 
care of the inequality requirement for $\hat{f}$ (and 
hence we would only need to work towards ensuring that 
$\hat{f}$ is linear). Thus we define our partially ordered 
set $\mathbb P$ to consist of all functions $g$ with 
domain some finite $X\subseteq V$ that satisfy
\begin{enumerate}
\item $(\forall x\in X)(g(x)\in I(x)_\infty)$,
\item $(\forall x\in X)(x\in W\Rightarrow f(x)\in g(x))$, 
\item $(\forall x,y\in X)(x+y\in X\Rightarrow (g(x)+g(y))\cap g(x+y)\neq\varnothing)$, and 
\item $(\forall x\in X)(\forall r\in\mathbb R)(rx\in X\Rightarrow rg(x)\cap g(rx)\neq\varnothing$.
\end{enumerate}
Hence every $g\in\mathbb P$ specifies, for a finite number 
of $x\in V$, a dyadic subinterval of $I(x)$ from which we 
intend to eventually pick the value $\hat{f}(x)$, in 
a way that is consistent with the fact that we want $\hat{f}$ 
to be a linear functional extending $f$. The partial ordering 
is given by: $g\leq g'$ iff $\dom(g')\subseteq\dom(g)$ and 
$(\forall x\in \dom(g'))(g(x)\subseteq g'(x))$. For each 
function $h:X\longrightarrow\omega$, with domain some 
$X\in[V]^{<\omega}$, 
we let 
$A_h=\{g\in\mathbb P\big|\dom(g)=X\text{ and }(\forall x\in X)(g(x)\in I(x)_{h(x)})\}$.

\begin{claim}\label{nonemptyness}
Each of the $A_h$ is a nonempty finite antichain.
\end{claim}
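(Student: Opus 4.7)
My plan is to verify the three assertions of the claim (finiteness, being an antichain, and nonemptyness) in turn; the main obstacle will be the last of them, since it is the only step that requires some genuine linear algebra rather than just combinatorics of dyadic intervals.

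Finiteness is immediate: any $g\in A_h$ is determined by a choice, for each of the finitely many $x\in X$, of one of the $2^{h(x)}$ dyadic subintervals of $I(x)$ of depth $h(x)$, so $|A_h|\leq\prod_{x\in X}2^{h(x)}$. For the antichain property, suppose $g,g'\in A_h$ are distinct and pick $x\in X$ with $g(x)\neq g'(x)$. Both $g(x)$ and $g'(x)$ are dyadic subintervals of $I(x)$ of the same depth, so by the observation at the end of the paragraph defining dyadic subintervals they can intersect in at most a single point (a shared endpoint). If $I(x)$ were a singleton then $g(x)=g'(x)$ would be forced, contradicting the choice of $x$; hence $I(x)$ is nondegenerate and all its dyadic subintervals have positive length. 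Any common lower bound $r\in\mathbb P$ of $g$ and $g'$ would then satisfy $r(x)\subseteq g(x)\cap g'(x)$, but $r(x)\in I(x)_\infty$ is a genuine interval of positive length and cannot be contained in a set of size at most one, so no common extension exists.

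For nonemptyness I would invoke the fact that the finite-dimensional Hahn-Banach extension theorem is provable in $\zf$. Setting $V'=W+\mathrm{span}(X)$, the quotient $V'/W$ is finite-dimensional, so $f$ can be extended one dimension at a time through finitely many applications of the classical one-dimensional Hahn-Banach argument (each of which amounts to picking a single real number between an explicit supremum and infimum, and so uses no form of choice) to produce a linear functional $\tilde f\colon V'\longrightarrow\mathbb R$ extending $f$ and still dominated by $p$. Applying $\tilde f\leq p$ to both $x$ and $-x$ yields $\tilde f(x)\in[-p(-x),p(x)]=I(x)$ for every $x\in X$. For each such $x$ I would then pick a dyadic subinterval $g(x)\in I(x)_{h(x)}$ containing $\tilde f(x)$, breaking ties by some canonical rule; since $X$ is finite, no choice axiom is needed. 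Conditions (1) and (2) in the definition of $\mathbb P$ hold by construction, while (3) and (4) are witnessed by $\tilde f$ itself via its linearity: for instance, if $x,y,x+y\in X$ then $\tilde f(x+y)=\tilde f(x)+\tilde f(y)$ lies in $(g(x)+g(y))\cap g(x+y)$, and analogously $r\tilde f(x)=\tilde f(rx)$ witnesses (4). Hence $g\in A_h$ and the claim is proved.
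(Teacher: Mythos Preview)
Your proof is correct and follows the same line as the paper's: both derive finiteness and the antichain property from the combinatorics of dyadic subintervals of a fixed depth, and both handle nonemptyness by iterating the one-dimensional Hahn--Banach extension finitely many times (so without any choice) to obtain a linear $\tilde f\leq p$ on $W+\mathrm{span}(X)$ and then selecting, for each $x\in X$, a dyadic subinterval of depth $h(x)$ containing $\tilde f(x)$. If anything, you are more explicit than the paper in checking conditions (2)--(4) for membership in $\mathbb P$, which the paper leaves to the reader.
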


\begin{proof}[Proof of Claim~\ref{nonemptyness}]
The fact that $A_h$ is a finite antichain follows directly 
from the fact that the interval $I(x)$ has only finitely 
many dyadic subintervals of depth $h(x)$, for each of the 
finitely many $x\in\dom(h)$ (and that the intersection of 
any two dyadic intervals of the same depth is either empty 
or a singleton). The nontrivial part of the claim is thus 
the nonemptyness of $A_h$. For this, we will use the following 
fact (which we will not prove because it properly belongs 
to Functional Analysis rather than Set Theory): for every 
linear functional $l:W'\longrightarrow\mathbb R$ defined 
on some subspace $W'$ of $V$, satisfying 
$(\forall x\in W')(l(x)\leq p(x))$, and for every 
$z\in V\setminus W'$, it is possible to extend $l$ to a 
linear functional $l':W'+\mathbb R z\longrightarrow\mathbb R$ such that 
for every $x\in W'+\mathbb R z$, 
$l'(x)\leq p(x)$\footnote{A proof of this fact can be found 
in~\cite[2.3.2, p. 57]{pedersen}.}. Thus, 
proceeding by induction, it can be shown that for every 
finite $X\subseteq V$ there is an extension $f'$ of $f$, 
defined in the subspace $W'$ generated by $W\cup X$, such 
that $(\forall x\in W')(f'(x)\leq p(x))$. Consequently, 
by linearity of $f'$ and positive homogeneity of $p$, 
$f'(x)\in[-p(-x),p(x)]=I(x)$, so we can let $g(x)$ be 
the leftmost dyadic subinterval of depth $h(x)$ of $I(x)$ 
containing $f'(x)$, and this way we have defined an 
element $g\in A_h$.
\end{proof}

We now define
\begin{equation*}
\mathcal A=\{A_h\big|h:X\longrightarrow\omega\text{ for some }X\in[V]^{<\omega}\},
\end{equation*}
and note that this is a centred family of finite antichains. 
For if we 
are given finitely many functions 
$h_1:X_1\longrightarrow\omega,\ldots,h_n:X_n\longrightarrow\omega$, 
we can define $h:(X_1\cup\cdots\cup X_n)\longrightarrow\omega$ 
by $h(x)=\max\{h_i(x)\big|1\leq i\leq n\text{ and }x\in X_i\}$, 
and take a $g\in A_h$ by Claim~\ref{nonemptyness}. Now for each 
$1\leq i\leq n$, and each 
$x\in X_i$, we pick a dyadic interval $g_i(x)$ of depth $h_i(x)$ 
containing the interval $g(x)$ (recall that $g(x)$ is a dyadic interval 
of depth $h(x)\geq h_i(x)$, so there is a unique such interval). 
Then it is clear that $g$ 
extends the element $g_i\in A_{h_i}$.

Hence we can invoke an $\mathcal A$-generic 2-linked set $G$. 
We define $\hat{f}:V\longrightarrow\mathbb R$ as follows: for 
every $x\in V$, we let 
$\mathcal I_x=\{g(x)\big|g\in G\text{ and }x\in\dom(g)\}\subseteq I_\infty$. 
For each $n<\omega$, since $G$ must intersect $A_{\{\langle x,n\rangle\}}$, 
it follows that $\mathcal I_x$ contains at least one dyadic 
subinterval of $I(x)$ of depth $n$; and since $G$ is 2-linked this 
interval is, in fact, unique. Now if $n<m$ and $J,J'\in\mathcal I_x$ 
are the two dyadic subintervals of depths $n$ and $m$, respectively, 
then $J'\subseteq J$. Hence the family $\mathcal I_x$ 
can be thought of as a nested sequence of closed intervals, with 
arbitrarily small diameters; therefore there is a 
unique real number $\hat{f}(x)\in\bigcap\mathcal I_x$. By construction, 
$\hat{f}(x)\in I(x)$, so $\hat{f}(x)\leq p(x)$ for every $x\in V$. 
Also, given $x\in W$, we have that $f(x)\in\bigcap\mathcal I_x$, 
thus $\hat{f}(x)=f(x)$, so the function $\hat{f}$ actually 
extends $f$. We 
now proceed to show that $\hat{f}$ is a linear functional, so let 
$x,y\in V$ and suppose towards a contradiction that 
$\hat{f}(x)+\hat{f}(y)\neq\hat{f}(x+y)$. Then we can pick an $n<\omega$ 
so large, that no interval of length $\frac{p(x+y)+p(-x-y)}{2^n}$ 
containing $\hat{f}(x+y)$ can intersect an interval of length 
$\frac{p(x)+p(-x)+p(y)+y(-y)}{2^{n-1}}$ containing 
$\hat{f}(x)+\hat{f}(y)$. But then, letting $h$ be the function 
constantly $n$ with domain $\{x,y,x+y\}$, there must be a 
$g\in G\cap A_h$; and so on the one hand we must have 
$g(x)\in I(x)_n,g(y)\in I(y)_n,g(x+y)\in I(x+y)_n$ (so that 
$g(x)$, $g(y)$, and $g(x+y)$ are intervals of lengths 
$\frac{p(x)+p(-x)}{2^n}$, $\frac{p(y)-p(-y)}{2^n}$, 
and $\frac{p(x+y)+p(-x-y)}{2^n}$, respectively), and 
$(g(x)+g(y))\cap g(x+y)\neq\varnothing$; while 
symultaneously $\hat{f}(x)+\hat{f}(y)\in g(x)+g(y)$ 
and $\hat{f}(x+y)\in g(x+y)$, which is a contradiction. 
Hence $\hat{f}(x)+\hat{f}(y)=\hat{f}(x+y)$; and in a 
completely analogous way the reader can verify that 
$\hat{f}(rx)=r\hat{f}(x)$ for every $x\in V$ and 
every $r\in\mathbb R$. This finishes the proof.
\end{example}

\section*{Acknowledgments}
The first author was partially supported by Postdoctoral Fellowship 
number 263820 from the Consejo Nacional de Ciencia y Tecnolog\'{\i}a 
(CONACyT), Mexico. The second author was supported by grant number 
DMS-1307164 of the National Science Foundation, as part of the summer 
Research Experience for Undergraduates program at the Department of 
Mathematics, University of Michigan.

\end{document}